\newtheorem{theorem}{Theorem}[section]
\newtheorem{definition}[theorem]{Definition}
\newtheorem{problem}[theorem]{Problem}
\title{This is the title}
\begin{document}
\hrule\hrule\hrule\hrule\hrule
\vspace{0.3cm}	
\begin{center}
{\bf\large{{Noncommutative Donoho-Elad-Gribonval-Nielsen-Fuchs Sparsity Theorem}}}\\
\vspace{0.3cm}
\hrule\hrule\hrule\hrule\hrule
\vspace{0.3cm}
\textbf{K. Mahesh Krishna}\\
School of Mathematics and Natural Sciences\\
Chanakya University Global Campus\\
NH-648, Haraluru Village\\
Devanahalli Taluk, 	Bengaluru  Rural District\\
Karnataka State 562 110 India\\
Email: kmaheshak@gmail.com\\

Date: \today
\end{center}

\hrule\hrule
\vspace{0.5cm}
\textbf{Abstract}: Breakthrough Sparsity Theorem, derived  independently by Donoho and Elad   \textit{[Proc. Natl. Acad. Sci. USA, 2003]},  Gribonval and Nielsen \textit{[IEEE Trans. Inform. Theory, 2003]} and Fuchs \textit{[IEEE Trans. Inform. Theory, 2004]}  says that unique  sparse solution to NP-Hard $\ell_0$-minimization problem can be obtained using unique solution of P-Type $\ell_1$-minimization problem. In this paper, we derive noncommutative version of their result using frames for Hilbert C*-modules.

\textbf{Keywords}:  Sparse solution, Frame, Hilbert C*-module.

\textbf{Mathematics Subject Classification (2020)}: 42C15, 46L08.\\

\hrule

\hrule
\section{Introduction}

Let $\mathcal{H}$ be a  finite dimensional Hilbert  space over $\mathbb{K}$ ($\mathbb{C}$ or $\mathbb{R}$). A  finite collection $\{\tau_j\}_{j=1}^n$ in $\mathcal{H}$ is said to be a \textbf{frame} (also known as \textbf{dictionary}) \cite{BEDETTOFICKUS, HANKORNELSON} for $\mathcal{H}$ if there are $a, b >0$ such that 
\begin{align*}
	a\|h\|^2\leq \sum_{j=1}^{n}|\langle h,  \tau_j \rangle|^2\leq b \|h\|^2, \quad \forall h \in \mathcal{H}.
\end{align*}
 A frame $\{\tau_j\}_{j=1}^n$ for  $\mathcal{H}$ is said to be \textbf{normalized} if $\|\tau_j\|=1$ for all $1\leq j \leq n$. Note that any frame can be normalized by dividing each element by its norm. Given a frame $\{\tau_j\}_{j=1}^n$ for  $\mathcal{H}$, we define the analysis operator
\begin{align*}
	\theta_\tau:\mathcal{H}\ni h \mapsto \theta_\tau h\coloneqq (\langle h,  \tau_j \rangle)_{j=1}^n \in \mathbb{K}^n.
\end{align*}
Adjoint of the analysis operator is known as  the synthesis operator whose expression  is 
\begin{align*}
\theta_\tau^*: \mathbb{K}^n \ni (a_j)_{j=1}^n \mapsto \theta_\tau^*(a_j)_{j=1}^n\coloneqq\sum_{j=1}^{n}a_j\tau_j \in \mathcal{H}.
\end{align*}
Given $d\in \mathbb{K}^n$, let $\|d\|_0$ be the number of nonzero entries in $d$.   Following $\ell_0$-minimization problem appears in many  of electronic devices.
\begin{problem} \label{P0}
 Let $\{\tau_j\}_{j=1}^n$ be a normalized frame for  $\mathcal{H}$.  Given $h \in \mathcal{H}$, solve 	
\begin{align*}
	\text{minimize }\{\|d\|_0:d\in \mathbb{K}^n\} \quad \text{ subject to } \quad \theta_\tau^*d=h.
\end{align*}
\end{problem}
Recall that $c \in \mathbb{K}^n$ is said to be a unique solution to Problem \ref{P0}  if it satisfies following two conditions.
\begin{enumerate}[\upshape(i)]
	\item $\theta_\tau^*c=h$.
	\item If $d \in \mathbb{K}^n$ satisfies $\theta_\tau^*d=h$, then 
\begin{align*}
	\|d\|_0>\|c\|_0.
\end{align*}
\end{enumerate}
In 1995, Natarajan showed that Problem \ref{P0} is NP-Hard \cite{NATARAJAN, FOUCARTRAUHUT}. 
As the operator $\theta_\tau^*$ is surjective, for a given $h \in \mathcal{H}$, there is a $d\in \mathbb{K}^n$ such that  $\theta_\tau^*d=h.$ Thus the central problem is to say when the  solution to Problem \ref{P0} is unique.   It is well-known that  \cite{CHENDONOHOSAUNDERS, DONOHOHUO, BRUCKSTEINDONOHOELAD}   following problem is the closest convex relaxation problem to Problem \ref{P0}.
\begin{problem} \label{P1}
Let $\{\tau_j\}_{j=1}^n$ be a normalized frame for  $\mathcal{H}$.	Given $h \in \mathcal{H}$, solve 	
	\begin{align*}
		\text{minimize }\{\|d\|_1:d\in \mathbb{K}^n\} \quad \text{ subject to } \quad \theta_\tau^*d=h.
	\end{align*}
\end{problem}
There are several linear programmings available to obtain solution of Problem \ref{P1} and it  is a P-problem \cite{XUEYE, TERLAKY, TILLMANN}.\\
Most important result which shows that by solving Problem \ref{P1} we also get a solution to Problem \ref{P0} is obtained independently by Donoho and Elad \cite{DONOHOELAD},  Gribonval and Nielsen \cite{GRIBONVALNIELSEN} and Fuchs \cite{FUCHS, FUCHS2} is the following.
\begin{theorem}\cite{DONOHOELAD, GRIBONVALNIELSEN, KUTYNIOK, ELAD, FUCHS2, FUCHS}\label{DEGN} (\textbf{Donoho-Elad-Gribonval-Nielsen-Fuchs Sparsity Theorem})
	Let $\{\tau_j\}_{j=1}^n$ be a  normalized frame  for  $\mathcal{H}$. If  
 $h \in \mathcal{H}$ can be written as  $	h=\theta_\tau^*c$ for some  $c\in \mathbb{K}^n$ satisfying 
 \begin{align*}
 \|c\|_0<\frac{1}{2}\left(1+\frac{1}{\displaystyle\max_{1\leq j, k \leq n, j\neq k}|\langle \tau_j, \tau_k\rangle|}\right),
 \end{align*}
 then $c$ is the unique solution to Problem \ref{P1} and Problem \ref{P0}.
\end{theorem}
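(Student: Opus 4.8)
The plan is to reduce both uniqueness assertions to two facts about the Gram matrix $G = (\langle \tau_j,\tau_k\rangle)_{j,k=1}^{n}$ of the normalized dictionary, whose off-diagonal entries all have modulus at most the \emph{coherence} $\mu := \max_{1\le j,k\le n,\, j\neq k}|\langle \tau_j,\tau_k\rangle|$. Write $s := \|c\|_0$ and $S := \operatorname{supp}(c)$, so $|S|=s$, and observe that the hypothesis is exactly $(2s-1)\mu < 1$. The degenerate cases are disposed of at once: if $\mu = 0$ the $\tau_j$ are orthonormal, $\theta_\tau^*$ is injective, and there is nothing to prove; if $s=0$ then $h=0$, $c=0$, and $0$ is plainly the unique minimizer of both problems. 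So assume $\mu>0$ and $s\ge 1$.

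First I would establish a spark-type bound: every nonzero $z\in\ker\theta_\tau^*$ satisfies $\|z\|_0\ge 1+\tfrac{1}{\mu}$. Indeed, with $T:=\operatorname{supp}(z)$ and $t:=|T|$, the relation $\sum_{j\in T}z_j\tau_j=0$ forces $G_T z_T = 0$ for the principal submatrix $G_T$ of $G$ indexed by $T$; since $G_T$ has unit diagonal and off-diagonal entries of modulus $\le\mu$, applying Gershgorin's circle theorem to the eigenvalue $0$ yields $1\le (t-1)\mu$, i.e. $t\ge 1+\tfrac{1}{\mu}$. Two consequences follow. First, $\ell_0$-uniqueness: if $\theta_\tau^* d=h$ with $\|d\|_0\le\|c\|_0$, then $z:=c-d\in\ker\theta_\tau^*$ has $\|z\|_0\le 2s<1+\tfrac{1}{\mu}$, forcing $z=0$, so $c$ is the unique solution of Problem~\ref{P0}. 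Second, $\{\tau_j\}_{j\in S}$ is linearly independent, since $s<\tfrac12(1+\tfrac{1}{\mu})<1+\tfrac{1}{\mu}$.

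The substantive part is $\ell_1$-uniqueness, which I would obtain from a Fuchs-type dual certificate. The submatrix $G_S = I_S + E_S$ has $\|E_S\|_{\infty\to\infty}\le (s-1)\mu<1$, so $G_S$ is invertible with $\|G_S^{-1}\|_{\infty\to\infty}\le(1-(s-1)\mu)^{-1}$ by a Neumann-series estimate. Put $\alpha:=G_S^{-1}\operatorname{sgn}(c_S)$ and $w:=\sum_{j\in S}\alpha_j\tau_j\in\mathcal H$, so that $\langle w,\tau_k\rangle=\operatorname{sgn}(c_k)$ for $k\in S$ by construction, while for $k\notin S$,
\[
|\langle w,\tau_k\rangle|\ \le\ \mu\|\alpha\|_1\ \le\ \frac{s\mu}{1-(s-1)\mu}\ <\ 1,
\]
the last strict inequality being precisely the hypothesis $(2s-1)\mu<1$. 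Now take any $d\neq c$ with $\theta_\tau^* d=h$ and set $z:=d-c\in\ker\theta_\tau^*\setminus\{0\}$; by the independence of $\{\tau_j\}_{j\in S}$, $z$ has at least one nonzero coordinate outside $S$. Using $|c_j+z_j|\ge |c_j|+\operatorname{Re}\big(\overline{\operatorname{sgn}(c_j)}\,z_j\big)$ on $S$, testing the identity $\sum_j z_j\tau_j=0$ against $w$, and invoking $|\langle w,\tau_k\rangle|<1$ for $k\notin S$, I would chain
\[
\|d\|_1\ \ge\ \|c\|_1+\operatorname{Re}\!\sum_{j\in S}\overline{\operatorname{sgn}(c_j)}\,z_j+\|z_{S^c}\|_1\ >\ \|c\|_1-\|z_{S^c}\|_1+\|z_{S^c}\|_1\ =\ \|c\|_1,
\]
so $c$ is also the unique minimizer of Problem~\ref{P1}.

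The step I expect to be the crux is the sharp estimate $|\langle w,\tau_k\rangle|<1$ for $k\notin S$: it is here, and only here, that the constant $\tfrac12(1+\tfrac{1}{\mu})$ is forced, and securing the \emph{strict} inequality — together with the observation that a nonzero kernel vector cannot be supported inside $S$ — is exactly what promotes ``$c$ minimizes'' to ``$c$ uniquely minimizes''. The remaining ingredients (the Gershgorin bound, the Neumann-series estimate on $\|G_S^{-1}\|_{\infty\to\infty}$, and the bookkeeping with the conjugation convention of $\langle\cdot,\cdot\rangle$) are routine.
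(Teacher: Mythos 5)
Your proof is correct, but it follows a genuinely different route from the one the paper takes. The paper never argues via the Gram matrix spectrum or a dual certificate: it first proves an abstract equivalence (its Theorem \ref{IFF}) between $\ell_1$-uniqueness of all $k$-sparse representations and a null space property, $\|d_M\|_1<\tfrac12\|d\|_1$ for all nonzero $d\in\ker\theta_\tau^*$ and all $|M|\le k$; it then verifies the NSP from the coherence bound by expanding $\theta_\tau\theta_\tau^*d=0$ coordinatewise to get $d_k=-\sum_{j\ne k}d_j\langle\tau_j,\tau_k\rangle$, hence $|d_k|\le\mu(\|d\|_1-|d_k|)$, and summing over $M$; finally it deduces $\ell_0$-uniqueness \emph{from} $\ell_1$-uniqueness by applying the $\ell_1$ statement twice to reach a contradiction. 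You instead get $\ell_0$-uniqueness directly and independently from a spark bound ($\operatorname{spark}\ge 1+1/\mu$ via Gershgorin), and $\ell_1$-uniqueness from a Fuchs-type certificate $w$ built with $G_S^{-1}\operatorname{sgn}(c_S)$ and a Neumann-series estimate. Both are classical proofs of this theorem; your dual-certificate route is the one that extends naturally to noisy/robust recovery and avoids any detour through $\ell_1$ for the $\ell_0$ claim, while the paper's NSP route uses only the triangle inequality and submultiplicativity of norms --- which is exactly why it is the one that survives the passage to Hilbert C*-modules that is the point of the paper, whereas Gershgorin, matrix inversion of $G_S$, and sign vectors do not transfer readily to coefficients in a C*-algebra. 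Your treatment of the strictness (the nonzero kernel vector must have a coordinate outside $S$, by linear independence of $\{\tau_j\}_{j\in S}$) correctly supplies the point that upgrades minimality to unique minimality.
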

Our fundamental motivation comes from the following question: What is the noncommutative analogue of Theorem \ref{DEGN}? This is then naturally connected with the notion of Hilbert C*-modules which  are first introduced by Kaplansky \cite{KAPLANSKY} for modules over commutative C*-algebras and later developed for modules over arbitrary C*-algebras by Paschke  \cite{PASCHKE} and Rieffel \cite{RIEFFEL}. We end the introduction by recalling the definition of Hilbert C*-modules. 
\begin{definition}\cite{KAPLANSKY, PASCHKE, RIEFFEL}
	Let $\mathcal{A}$ be a  unital C*-algebra. A left module 	 $\mathcal{E}$  over $\mathcal{A}$ is said to be a (left) Hilbert C*-module if there exists a  map $ \langle \cdot, \cdot \rangle: \mathcal{E}\times \mathcal{E} \to \mathcal{A}$ such that the following hold. 
	\begin{enumerate}[\upshape(i)]
		\item $\langle x, x \rangle  \geq 0$, $\forall x \in \mathcal{E}$. If $x \in  \mathcal{E}$ satisfies $\langle x, x \rangle  =0 $, then $x=0$.
		\item $\langle x+y, z \rangle  =\langle x, z \rangle+\langle y, z \rangle$, $\forall x,y,z \in \mathcal{E}$.
		\item  $\langle ax, y \rangle  =a\langle x, y \rangle$, $\forall x,y \in \mathcal{E}$, $\forall a \in \mathcal{A}$.
		\item $\langle x, y \rangle=\langle y,x \rangle^*$, $\forall x,y \in \mathcal{E}$.
		\item $\mathcal{E}$ is complete w.r.t. the norm $\|x\|\coloneqq \sqrt{\|\langle x, x \rangle\|}$,  $\forall x \in \mathcal{E}$.
	\end{enumerate}
\end{definition}

\section{Noncommutative Donoho-Elad-Gribonval-Nielsen-Fuchs Sparsity Theorem}
 Observe that the notion of frames is needed for Theorem \ref{DEGN}. Thus we want noncommutative frames. These are introduced in 2002 by Frank and Larson in their seminal paper \cite{FRANKLARSON}. We begin by  recalling  the definition of noncommutative  frames for Hilbert C*-modules. This notion is already well-developed in parallel with Hilbert space frame theory \cite{RAEBURNTHOMPSON, ARAMBASIC, HANJINGMOHAPATRA}. In the paper, 	we consider only finite rank modules. 
\begin{definition}\cite{FRANKLARSON}
	Let 	 $\mathcal{E}$ be a Hilbert C*-module over a  unital C*-algebra $\mathcal{A}$. A collection $\{\tau_j\}_{j=1}^n $ in  $\mathcal{E}$ is said to be a (modular) \textbf{frame} for  $\mathcal{E}$ if there are real $a, b>0$ such that 
	\begin{align*}
	a	\langle x, x \rangle \leq   \sum_{j=1}^n  \langle x, \tau_j \rangle \langle \tau_j, x \rangle \leq b \langle x, x \rangle,   \quad \forall x \in \mathcal{E}.
	\end{align*}
\end{definition}
A collection  $\{\tau_j\}_{j=1}^n $ in a Hilbert C*-module $\mathcal{E}$  over unital C*-algebra $\mathcal{A}$ with identity $1$  is said to have \textbf{unit inner product} if
\begin{align*}
	\langle \tau_j, \tau_j \rangle =1, \quad \forall 1\leq j \leq n.
\end{align*}
Let  $\mathcal{A}$ be a unital C*-algebra. For $n \in \mathbb{N}$, let $\mathcal{A}^n$ be the standard left Hilbert C*-module over $\mathcal{A}$ with inner product 
\begin{align*}
	\langle (a_j)_{j=1}^n, (b_j)_{j=1}^n\rangle \coloneqq \sum_{j=1}^{n}a_jb_j^*, \quad \forall (a_j)_{j=1}^n, (b_j)_{j=1}^n \in \mathcal{A}^n.
\end{align*}
Hence norm on $\mathcal{A}^n$ is 
\begin{align*}
	\|(a_j)_{j=1}^n\|_2\coloneqq \left\|\sum_{j=1}^{n}a_ja_j^*\right\|^\frac{1}{2}, \quad \forall (a_j)_{j=1}^n \in \mathcal{A}^n.
\end{align*}
We define 
\begin{align*}
	\|(a_j)_{j=1}^n\|_1\coloneqq \sum_{j=1}^{n}\|a_j\|, \quad \forall (a_j)_{j=1}^n \in \mathcal{A}^n.	
\end{align*}
A frame $\{\tau_j\}_{j=1}^n$ for  $\mathcal{E}$ gives the modular analysis morphism 
\begin{align*}
	\theta_\tau:\mathcal{E}\ni x \mapsto 	\theta_\tau x \coloneqq (\langle x,  \tau_j \rangle)_{j=1}^n \in \mathcal{A}^n 
\end{align*}
and the modular synthesis morphism  
\begin{align*}
	\theta_\tau^*: \mathcal{A}^n \ni (a_j)_{j=1}^n \mapsto\theta_\tau^*(a_j)_{j=1}^n\coloneqq \sum_{j=1}^{n}a_j\tau_j \in \mathcal{E}.
\end{align*}
With these notions, we generalize  Problems \ref{P0} and  \ref{P1}. In the entire paper,  $\mathcal{E}$ denotes a finite rank  Hilbert C*-module over a  unital C*-algebra $\mathcal{A}$.
\begin{problem}  \label{BP0}
Let $\{\tau_j\}_{j=1}^n $  be a unit inner product frame  for  $\mathcal{E}$.	Given $x \in \mathcal{E}$, solve 	
	\begin{align*}
		\text{minimize }\{\|d\|_0:d\in \mathcal{A}^n\} \quad \text{ subject to } \quad \theta_\tau^* d=x.
	\end{align*}
\end{problem}
\begin{problem}  \label{BP1}
	Let $\{\tau_j\}_{j=1}^n $  be a unit inner product frame for  $\mathcal{E}$.	Given $x \in \mathcal{E}$, solve 	
	\begin{align*}
		\text{minimize }\{\|d\|_1:d\in  \mathcal{A}^n\} \quad \text{ subject to } \quad \theta_\tau^* d=x.
	\end{align*}
\end{problem}
A very powerful property used to show Theorem \ref{DEGN}  is the notion of null space property (see \cite{KUTYNIOK, COHENDAHMENDEVORE}). We now define the same property for Hilbert C*-modules. We use following notations. Let $\{e_j \}_{j=1}^n$ be the canonical  basis for $\mathcal{A}^n$. Given $M\subseteq \{1, \dots, n\}$ and $d=(d_j)_{j=1}^n \in \mathcal{A}^n$,  define 
\begin{align*}
	d_M\coloneqq \sum_{j\in M}d_je_j.
\end{align*}
\begin{definition}
A unit inner product   frame  $\{\tau_j \}_{j=1}^n$ for   $\mathcal{E}$ is said to have the (modular) \textbf{null space property} (we write NSP) of order $k\in  \{1, \dots, n\}$ if 	for every $ M \subseteq  \{1, \dots, n\} $ with $o(M)\leq k$, we have
\begin{align*}
	\|d_M\|_1<\frac{1}{2}\|d\|_1,  \quad  \forall d \in \ker(\theta_\tau^*), d \neq 0.
\end{align*}
\end{definition}
We first  relate NSP with Problem \ref{BP1}.
\begin{theorem}\label{IFF}
Let  $\{\tau_j\}_{j=1}^n $  be a unit inner product frame for  $\mathcal{E}$ and let $1\leq k \leq n$. The following are equivalent.
\begin{enumerate}[\upshape(i)]
	\item  If $x \in \mathcal{E}$ can be written as  $	x=\theta_\tau^* c$ for some  $c \in \mathcal{A}^n$ satisfying $\|c\|_0\leq k$, then $c$ is the unique solution to Problem \ref{BP1}.
	\item   $\{\tau_j \}_{j=1}^n$ satisfies the  NSP of order $k$.
\end{enumerate}	
\end{theorem}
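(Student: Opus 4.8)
The plan is to transplant the classical equivalence between the null space property and uniqueness of the $\ell_1$-minimizer (see \cite{KUTYNIOK, COHENDAHMENDEVORE}) to the module setting, the point being that none of the steps genuinely use commutativity: the norm $\|\cdot\|_1$ on $\mathcal{A}^n$ is assembled only from the C*-norms of the individual coordinates, so coefficients are never multiplied with one another. I would first record two elementary facts that make this work. One: $\|\cdot\|_1$ is a norm on $\mathcal{A}^n$, its subadditivity coming directly from the triangle inequality of the C*-norm. Two: it splits additively over disjoint index sets, $\|d\|_1 = \|d_M\|_1 + \|d_{M^c}\|_1$ for every $M \subseteq \{1,\dots,n\}$ (writing $M^c := \{1,\dots,n\}\setminus M$), and likewise $\theta_\tau^*$ is additive with $\theta_\tau^* d = \theta_\tau^* d_M + \theta_\tau^* d_{M^c}$. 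Throughout, if $c,d$ are both feasible for a given $x$, then $v := c-d$ lies in $\ker\theta_\tau^*$.

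For the implication (ii) $\Rightarrow$ (i): let $x = \theta_\tau^* c$ with $\|c\|_0 \le k$, set $M := \{j : c_j \neq 0\}$ (so $o(M) \le k$ and $c_{M^c} = 0$), and take any feasible $d \neq c$. With $v := c - d \in \ker\theta_\tau^* \setminus \{0\}$, the NSP of order $k$ gives $\|v_M\|_1 < \tfrac12\|v\|_1$, hence $\|v_M\|_1 < \|v_{M^c}\|_1$. Since $c_{M^c}=0$ we have $v_{M^c} = -d_{M^c}$, so $\|v_{M^c}\|_1 = \|d_{M^c}\|_1$, and then
\[
\|c\|_1 = \|c_M\|_1 \le \|c_M - d_M\|_1 + \|d_M\|_1 = \|v_M\|_1 + \|d_M\|_1 < \|v_{M^c}\|_1 + \|d_M\|_1 = \|d_{M^c}\|_1 + \|d_M\|_1 = \|d\|_1 .
\]
As $c$ itself is feasible, this says precisely that $c$ is the unique solution of Problem \ref{BP1}.

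For the implication (i) $\Rightarrow$ (ii): fix $M$ with $o(M) \le k$ and $v \in \ker\theta_\tau^* \setminus \{0\}$; I must show $\|v_M\|_1 < \tfrac12\|v\|_1$. Put $c := v_M$ and $d := -v_{M^c}$. Then $\|c\|_0 \le o(M) \le k$, and $0 = \theta_\tau^* v = \theta_\tau^* v_M + \theta_\tau^* v_{M^c}$ shows $\theta_\tau^* c = \theta_\tau^* d =: x$, so both are feasible for this $x$. Moreover $c \neq d$, since $c=d$ would force $v_M = -v_{M^c}$, and comparing coordinates inside and outside $M$ this forces $v = 0$. Applying (i) to $c$ (legitimate since $\|c\|_0 \le k$) with the competitor $d$ yields $\|d\|_1 > \|c\|_1$, i.e. $\|v_{M^c}\|_1 > \|v_M\|_1$; adding $\|v_M\|_1$ to both sides and using the additive splitting gives $\|v\|_1 > 2\|v_M\|_1$, which is the NSP of order $k$.

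I do not expect a genuine obstacle; the only thing to be careful about is that the translation really is faithful, so the one claim worth double-checking is the additive splitting $\|d\|_1 = \|d_M\|_1 + \|d_{M^c}\|_1$ together with the additivity of $\theta_\tau^*$ on $\mathcal{A}^n$ — both immediate from the definitions. The degenerate configurations (for instance $v$ supported entirely inside $M$) need no separate handling: in that case the inequality $\|d\|_1 > \|c\|_1$ extracted from (i) becomes self-contradictory, which is exactly the (correct) statement that such a $v$ cannot exist under (i), so the desired conclusion holds vacuously.
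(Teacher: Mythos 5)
Your proposal is correct and follows essentially the same route as the paper: the implication (i) $\Rightarrow$ (ii) by splitting a kernel element $v$ into the two feasible representations $v_M$ and $-v_{M^c}$ of a common $x$, and (ii) $\Rightarrow$ (i) by the standard triangle-inequality chain using $\|v\|_1=\|v_M\|_1+\|v_{M^c}\|_1$ and $c_{M^c}=0$. The only difference is cosmetic (you bound $\|c\|_1<\|d\|_1$ directly rather than expanding $\|b\|_1-\|c\|_1>0$), and you are slightly more careful than the paper in checking that the two competing representations are distinct before invoking uniqueness.
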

\begin{proof}
\begin{enumerate}[\upshape(i)]
	\item 	$\implies$ (ii) Let $ M \subseteq     \{1, \dots, n\} $ with $o(M)\leq k$ and let $d \in \ker(\theta_\tau^*), d \neq 0$. Then we have 
	\begin{align*}
	0=\theta_\tau^* d=\theta_\tau^*(d_M+d_{M^c})=\theta_\tau^*(d_M)+\theta_\tau^*(d_{M^c})
	\end{align*}
which gives 
\begin{align*}
	\theta_\tau^*(d_M)=\theta_\tau^*(-d_{M^c}).
\end{align*}
Define $c\coloneqq d_M \in \mathcal{A}^n$ and $x\coloneqq \theta_\tau^*(d_M)$. Then we have $\|c\|_0\leq o(M)\leq k$ and 
\begin{align*}
	x=\theta_\tau^* c=\theta_\tau^*(-d_{M^c}).
\end{align*}
By assumption (i),  we then have 
\begin{align*}
	\|c\|_1=\|d_M\|_1<\|-d_{M^c}\|_1=\|d_{M^c}\|_1.
\end{align*}
Rewriting previous inequality gives 
\begin{align*}
	\|d_M\|_1<\|d\|_1-\|d_M\|_1 \implies 	\|d_M\|_1<\frac{1}{2}\|d\|_1.
\end{align*}
Hence $\{\tau_j \}_{j=1}^n$  satisfies the  NSP of order $k$.
	\item    $\implies$ (i)  Let  $x \in \mathcal{E}$ can be written as  $	x=\theta_\tau^* c$ for some  $c \in \mathcal{A}^n$ satisfying $\|c\|_0\leq k$. Define $M\coloneqq \operatorname{supp}(c)$. Then $o(M)=\|c\|_0 \leq k$. By assumption (ii), we then have 
	\begin{align}\label{PI}
	\|d_M\|_1<\frac{1}{2}\|d\|_1,  \quad  \forall d \in \ker(\theta_\tau^*), d \neq 0.
\end{align}
Let $b\in \mathcal{A}^n$ be such that $x=\theta_\tau^* b$ and $b\neq c$. Define $a\coloneqq b-c \in \mathcal{A}^n$. Then $\theta_\tau^* a=\theta_\tau^* b-\theta_\tau^* c=x-x=0$  and hence $a \in \ker(\theta_\tau^*), a \neq 0$. Using Inequality (\ref{PI}), we get 
\begin{align}\label{P2}
	&\|a_M\|_1<\frac{1}{2}\|a\|_1 \implies \|a_M\|_1<\frac{1}{2}(\|a_{M}\|_1+\|a_{M^c}\|_1) \nonumber \\
	&\implies \|a_M\|_1<\|a_{M^c}\|_1.
\end{align}
Using Inequality (\ref{P2}) and the information that $c$ is supported on $M$, we get 
\begin{align*}
\|b\|_1-\|c\|_1&=\|b_M\|_1+\|b_{M^c}\|_1-\|c_M\|_1-\|c_{M^c}\|_1\\
&=\|b_M\|_1+\|b_{M^c}\|_1-\|c_M\|_1=\|b_M\|_1+\|(b-c)_{M^c}\|_1-\|c_M\|_1\\
&=\|b_M\|_1+\|a_{M^c}\|_1-\|c_M\|_1>\|b_M\|_1+\|a_M\|_1-\|c_M\|_1\\
&\geq \|b_M\|_1+\|(b-c)_M\|_1-\|c_M\|_1\geq \|b_M\|_1-\|b_M\|_1+\|c_M\|_1-\|c_M\|_1= 0.
\end{align*}
Hence $c$ is the unique solution to Problem \ref{BP1}.
\end{enumerate}	
\end{proof}
Using Theorem \ref{IFF} we obtain modular version of Theorem \ref{DEGN}.
\begin{theorem}\label{TDP}
Let  $\{\tau_j\}_{j=1}^n $  be a unit inner product frame for  $\mathcal{E}$.	If $x \in \mathcal{E}$ can be written as  $	x=\theta_\tau^* c$ for some  $c \in \mathcal{A}^n$ satisfying 
\begin{align}\label{I}
\|c\|_0<\frac{1}{2}\left(1+\frac{1}{\displaystyle\max_{1\leq j, k \leq n, j\neq k}\|\langle \tau_j, \tau_k\rangle\|}\right),	
\end{align}
then $c$ is the unique solution to Problem \ref{BP1}.
\end{theorem}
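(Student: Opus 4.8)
The plan is to derive Theorem \ref{TDP} from Theorem \ref{IFF} by showing that the coherence hypothesis \eqref{I} forces $\{\tau_j\}_{j=1}^n$ to satisfy the modular NSP of a suitable order. Write $\mu:=\max_{1\le j,k\le n,\,j\ne k}\|\langle\tau_j,\tau_k\rangle\|$ and put $k:=\|c\|_0$; the degenerate case $c=0$ is immediate (then $x=0$ and $c=0$ is trivially the unique solution of Problem \ref{BP1}, since every nonzero element of $\ker(\theta_\tau^*)$ has strictly positive $\|\cdot\|_1$), so assume $1\le k\le n$. Since $1+\mu\ge 1>0$, hypothesis \eqref{I} can be rewritten as $k\cdot\frac{\mu}{1+\mu}<\frac12$. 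Once I show that the frame has the NSP of order $k$, then because $\|c\|_0\le k$, Theorem \ref{IFF} (the implication (ii)$\implies$(i)) gives at once that $c$ is the unique solution of Problem \ref{BP1}.

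To verify the NSP, fix $d=(d_j)_{j=1}^n\in\ker(\theta_\tau^*)$ with $d\ne0$; I will bound each coordinate norm $\|d_\ell\|$ by a multiple of $\|d\|_1$. The key identity comes from pairing the relation $0=\theta_\tau^*d=\sum_{j=1}^n d_j\tau_j$ in $\mathcal{E}$ against $\tau_\ell$: using left-linearity of the $\mathcal{A}$-valued inner product (axiom (iii)) together with the unit inner product normalization $\langle\tau_\ell,\tau_\ell\rangle=1$, one obtains
\[
0=\Big\langle\sum_{j=1}^n d_j\tau_j,\ \tau_\ell\Big\rangle=d_\ell+\sum_{j\ne\ell}d_j\langle\tau_j,\tau_\ell\rangle,\qquad\text{hence}\qquad d_\ell=-\sum_{j\ne\ell}d_j\langle\tau_j,\tau_\ell\rangle .
\]
Taking C*-norms and applying the triangle inequality together with submultiplicativity of the norm gives $\|d_\ell\|\le\sum_{j\ne\ell}\|d_j\|\,\|\langle\tau_j,\tau_\ell\rangle\|\le\mu\sum_{j\ne\ell}\|d_j\|=\mu(\|d\|_1-\|d_\ell\|)$, and solving for $\|d_\ell\|$ yields the coordinatewise estimate $\|d_\ell\|\le\frac{\mu}{1+\mu}\|d\|_1$.

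Summing this over $\ell\in M$ for an arbitrary $M\subseteq\{1,\dots,n\}$ with $o(M)\le k$ gives $\|d_M\|_1=\sum_{\ell\in M}\|d_\ell\|\le k\cdot\frac{\mu}{1+\mu}\|d\|_1<\frac12\|d\|_1$, where the final inequality is strict because $\|d\|_1>0$ and $k\cdot\frac{\mu}{1+\mu}<\frac12$ by \eqref{I}. This is precisely the NSP of order $k$, and the proof concludes by applying Theorem \ref{IFF}. I do not anticipate a genuine obstacle: the argument is the operator-norm transcription of the classical coherence bound for dictionaries, and the only points demanding care are the bookkeeping with the left-module inner-product conventions in the pairing step, the verification that passing from $\mathcal{A}$-valued identities to scalar norm inequalities is legitimate (which it is, by submultiplicativity of the C*-norm), and the separate handling of the trivial case $c=0$.
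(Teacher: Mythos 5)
Your proposal is correct and follows essentially the same route as the paper: derive the coordinatewise identity $d_\ell=-\sum_{j\ne\ell}d_j\langle\tau_j,\tau_\ell\rangle$ for $d\in\ker(\theta_\tau^*)$ by pairing against $\tau_\ell$ (the paper does this via $\langle\theta_\tau\theta_\tau^*d,e_\ell\rangle$, which is the same computation), take C*-norms to get $\|d_\ell\|\le\tfrac{\mu}{1+\mu}\|d\|_1$, sum over $M$ to establish the NSP of order $\|c\|_0$, and invoke Theorem \ref{IFF}. Your separate handling of the degenerate case $c=0$ (where NSP of order $0$ is not defined and Theorem \ref{IFF} does not directly apply) is a small point of care the paper glosses over, but it does not change the argument.
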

\begin{proof}
We show that 	$\{\tau_j\}_{j=1}^n $  satisfies the  NSP of order $k\coloneqq \|c\|_0$. Then Theorem \ref{IFF} says that $c$ is the unique solution to Problem \ref{BP1}. Let  $x \in \mathcal{E}$ can be written as  $	x=\theta_\tau^* c$ for some  $c \in  \mathcal{A}^n$ satisfying $\|c\|_0\leq k$. Let $ M \subseteq   \{1, \dots, n\} $ with $o(M)\leq k$ and let $d=(d_j)_{j=1}^n \in \ker(\theta_\tau^*), d \neq 0$. Then we have 
\begin{align*}
	\theta_\tau\theta_\tau^* d=0.
\end{align*}
For each fixed $1\leq k \leq n$, above equation gives
\begin{align*}
	0&=\langle 	\theta_\tau\theta_\tau^*(d_j)_{j=1}^n, e_k \rangle=\langle 	\theta_\tau^*(d_j)_{j=1}^n, \theta_\tau^*e_k \rangle\\
	&=\langle 	\theta_\tau^*(d_j)_{j=1}^n, \tau_k \rangle
	=\sum_{j=1}^{n}d_j\langle \tau_j, \tau_k\rangle\\
	&=d_k\langle \tau_k, \tau_k\rangle+\sum_{j=1, j \neq k}^{n}d_j\langle \tau_j, \tau_k\rangle=d_k+\sum_{j=1, j \neq k}^{n}d_j\langle \tau_j, \tau_k\rangle.
\end{align*}
Therefore 
\begin{align*}
d_k=-\sum_{j=1, j\neq k}^nd_j\langle \tau_j, \tau_k\rangle, \quad \forall 1\leq k \leq n.	
\end{align*}
By taking norm,  
\begin{align*}
	\|d_k\|&=\left\|\sum_{j=1, j\neq k}^nd_j\langle \tau_j, \tau_k\rangle\right\|\leq \sum_{j=1, j\neq k}^n\|d_j\langle \tau_j, \tau_k\rangle\|\\
	&\leq \sum_{j=1, j\neq k}^n\|d_j\|\|\langle \tau_j, \tau_k\rangle\|
	\leq \left(\displaystyle\max_{1\leq j, k \leq n, j\neq k}\|\langle \tau_j, \tau_k\rangle\|\right)\sum_{j=1, j\neq k}^n\|d_j\|\\
	&=\left(\displaystyle\max_{1\leq j, k \leq n, j\neq k}\|\langle \tau_j, \tau_k\rangle\|\right)\left(\sum_{j=1}^n\|d_j\|-\|d_k\|\right)\\
	&=\left(\displaystyle\max_{1\leq j, k \leq n, j\neq k}\|\langle \tau_j, \tau_k\rangle\|\right)\left(\|d\|_1-\|d_k\|\right), \quad \quad \forall 1\leq k \leq n.
\end{align*}

By rewriting above inequality  we get
\begin{align}\label{S}
	\left(1+\frac{1}{\displaystyle\max_{1\leq j, k \leq n, j\neq k}\|\langle \tau_j, \tau_k\rangle\|}\right)\|d_k\|\leq \|d\|_1, \quad  \forall 1\leq k \leq n.
\end{align}
Summing Inequality (\ref{S}) over $M$ leads to 
\begin{align*}
		\left(1+\frac{1}{\displaystyle\max_{1\leq j, k \leq n, j\neq k}\|\langle \tau_j, \tau_k\rangle\|}\right)\|d_M\|_1&=	\left(1+\frac{1}{\displaystyle\max_{1\leq j, k \leq n, j\neq k}\|\langle \tau_j, \tau_k\rangle\|}\right)\sum_{k\in M}\|d_k\|\\
	&\leq \|d\|_1	\sum_{k \in M} 1= \|d\|_1o(M).
\end{align*}
Finally using Inequality (\ref{I})

\begin{align*}
\|d_M\|_1&\leq \left(1+\frac{1}{\displaystyle\max_{1\leq j, k \leq n, j\neq k}\|\langle \tau_j, \tau_k\rangle\|}\right)^{-1} \|d\|_1o(M)\\
&\leq\left(1+\frac{1}{\displaystyle\max_{1\leq j, k \leq n, j\neq k}\|\langle \tau_j, \tau_k\rangle\|}\right)^{-1} \|d\|_1k\\
&=\left(1+\frac{1}{\displaystyle\max_{1\leq j, k \leq n, j\neq k}\|\langle \tau_j, \tau_k\rangle\|}\right)^{-1} \|d\|_1\|c\|_0\\
&<\frac{1}{2}\|d\|_1.
\end{align*}
Hence 	$\{\tau_j\}_{j=1}^n $  satisfies the  NSP of order $k$.
\end{proof}
\begin{theorem} (\textbf{Noncommutative Donoho-Elad-Gribonval-Nielsen-Fuchs Sparsity Theorem})
Let  $\{\tau_j\}_{j=1}^n $  be a unit inner product frame for  $\mathcal{E}$.
If $x \in \mathcal{E}$ can be written as  $	x=\theta_\tau^* c$ for some  $c \in \mathcal{A}^n$ satisfying 
\begin{align*}
	\|c\|_0<\frac{1}{2}\left(1+\frac{1}{\displaystyle\max_{1\leq j, k \leq n, j\neq k}\|\langle \tau_j, \tau_k\rangle\|}\right),	
\end{align*}
then $c$ is the unique solution to Problem \ref{BP0}.	
\end{theorem}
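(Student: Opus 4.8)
The plan is to derive this from Theorem~\ref{TDP} together with the coordinatewise estimate obtained inside its proof, rather than from the classical ``spark of the dictionary'' argument, since linear independence of the $\tau_j$ over a noncommutative C*-algebra $\mathcal{A}$ is more delicate than over a field. Recall the paper's notion of a unique solution: we must verify that $\theta_\tau^* c = x$ (which is part of the hypothesis) and that every $b \in \mathcal{A}^n$ with $\theta_\tau^* b = x$ and $b \neq c$ satisfies $\|b\|_0 > \|c\|_0$. Put $k \coloneqq \|c\|_0$ and $\mu \coloneqq \max_{1 \le j, \ell \le n,\, j \neq \ell}\|\langle \tau_j, \tau_\ell\rangle\|$, so that the hypothesis reads $2k < 1 + 1/\mu$.

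Assume, towards a contradiction, that some feasible $b \neq c$ has $\|b\|_0 \le k$. Set $a \coloneqq b - c$. Then $a \neq 0$ and $\theta_\tau^* a = \theta_\tau^* b - \theta_\tau^* c = x - x = 0$, so $a \in \ker(\theta_\tau^*)\setminus\{0\}$; moreover, setting $S \coloneqq \operatorname{supp}(b) \cup \operatorname{supp}(c)$ we have $\operatorname{supp}(a) \subseteq S$ and $o(S) \le \|b\|_0 + \|c\|_0 \le 2k$. As established within the proof of Theorem~\ref{TDP}, every $d \in \ker(\theta_\tau^*)$ satisfies $(1 + 1/\mu)\|d_\ell\| \le \|d\|_1$ for all $1 \le \ell \le n$. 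Applying this with $d = a$, summing over $\ell \in S$, and using $a_S = a$ (because $\operatorname{supp}(a) \subseteq S$, hence $\sum_{\ell \in S}\|a_\ell\| = \|a\|_1$), we obtain
\begin{align*}
\left(1 + \frac{1}{\mu}\right)\|a\|_1 = \left(1 + \frac{1}{\mu}\right)\sum_{\ell \in S}\|a_\ell\| \le o(S)\,\|a\|_1 \le 2k\,\|a\|_1.
\end{align*}
Since $\|\cdot\|_1$ is a norm on $\mathcal{A}^n$ and $a \neq 0$, we have $\|a\|_1 > 0$, whence $1 + 1/\mu \le 2k$, contradicting $2k < 1 + 1/\mu$. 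Therefore $\|b\|_0 > k = \|c\|_0$ for every feasible $b \neq c$, i.e., $c$ is the unique solution to Problem~\ref{BP0}.

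I do not anticipate a genuine obstacle here; the only point deserving attention is that the standard commutative proof routes through the spark of the dictionary (bounded below by $1 + 1/\mu$ via a strictly diagonally dominant Gram submatrix), and this notion does not transfer mechanically to modules. The argument above sidesteps it by working solely with norm inequalities---the per-coordinate bound flowing from the unit-inner-product condition (already derived in the proof of Theorem~\ref{TDP}) together with the positive-definiteness of $\|\cdot\|_1$---so nothing beyond material already present in the paper is needed. One could alternatively prove a modular analogue of ``$\operatorname{spark} \ge 1 + 1/\mu$'' directly from $\langle \tau_\ell, \tau_\ell\rangle = 1$ by the same diagonal-dominance estimate and then invoke the classical two-line deduction, but reusing Theorem~\ref{TDP} is shorter.
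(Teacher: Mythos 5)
Your proof is correct, but it takes a genuinely different route from the paper's. The paper deduces uniqueness for Problem~\ref{BP0} from uniqueness for Problem~\ref{BP1}: it applies Theorem~\ref{TDP} to the competitor $d$ as well as to $c$ (noting that $\|d\|_0\leq\|c\|_0$ would make $d$ satisfy the same sparsity hypothesis), so that both $c$ and $d$ would be \emph{the} unique $\ell_1$-minimizer, forcing the contradiction $\|c\|_1<\|d\|_1$ and $\|d\|_1<\|c\|_1$. You instead bypass the $\ell_1$-uniqueness conclusion entirely and argue directly on the kernel: the difference $a=b-c$ is a nonzero element of $\ker(\theta_\tau^*)$ supported on at most $2k$ coordinates, and summing the per-coordinate bound $(1+1/\mu)\|a_\ell\|\leq\|a\|_1$ (the paper's Inequality~(\ref{S}), whose derivation from the unit-inner-product condition is indeed valid for any kernel element) over the support gives $1+1/\mu\leq 2k$, contradicting the hypothesis; the division by $\|a\|_1>0$ is justified since $\|\cdot\|_1$ vanishes only at $0$. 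This is precisely a modular spark-type argument, and it is self-contained modulo Inequality~(\ref{S}): it never uses the NSP machinery or Theorem~\ref{IFF}, so it would prove the $\ell_0$-uniqueness statement even for a reader who skips the $\ell_1$ theory. What the paper's route buys is the conceptual point that the $\ell_0$ solution is recovered \emph{by solving} the $\ell_1$ problem, which is the substance of the Donoho--Elad--Gribonval--Nielsen--Fuchs theorem; what your route buys is independence from that machinery and an explicit lower bound on the size of the support of any nonzero kernel element. One presentational quibble: your opening sentence announces that you will use Theorem~\ref{TDP} itself, but the argument only uses the intermediate estimate from its proof, so it would be cleaner to isolate Inequality~(\ref{S}) as a lemma and cite that instead.
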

\begin{proof}
Theorem \ref{TDP} says that  $c$ is the unique solution to Problem \ref{BP1}.  Let $d \in \mathcal{A}^n$ be such that 	$x=\theta_\tau^* d$. We claim that $\|d\|_0> \|c\|_0$.  If this fails, we must have $\|d\|_0\leq \|c\|_0$. We then have 
\begin{align*}
	\|d\|_0<\frac{1}{2}\left(1+\frac{1}{\displaystyle\max_{1\leq j, k \leq n, j\neq k}\|\langle \tau_j, \tau_k\rangle\|}\right).
\end{align*}
Theorem \ref{TDP} again says  that $d$ is also the unique solution to Problem \ref{BP1}. Therefore we must have  $\|c\|_1<\|d\|_1$ and $\|c\|_1>\|d\|_1$ which is a contradiction. So claim holds and we have $\|d\|_0> \|c\|_0$.
\end{proof}

 \bibliographystyle{plain}
 \bibliography{reference.bib}

\begin{thebibliography}{10}

\bibitem{ARAMBASIC}
Ljiljana Aramba\v{s}i\'{c}.
\newblock On frames for countably generated {H}ilbert {$C^*$}-modules.
\newblock {\em Proc. Amer. Math. Soc.}, 135(2):469--478, 2007.

\bibitem{BEDETTOFICKUS}
John~J. Benedetto and Matthew Fickus.
\newblock Finite normalized tight frames.
\newblock {\em Adv. Comput. Math.}, 18(2-4):357--385, 2003.

\bibitem{BRUCKSTEINDONOHOELAD}
Alfred~M. Bruckstein, David~L. Donoho, and Michael Elad.
\newblock From sparse solutions of systems of equations to sparse modeling of
  signals and images.
\newblock {\em SIAM Rev.}, 51(1):34--81, 2009.

\bibitem{CHENDONOHOSAUNDERS}
Scott~Shaobing Chen, David~L. Donoho, and Michael~A. Saunders.
\newblock Atomic decomposition by basis pursuit.
\newblock {\em SIAM J. Sci. Comput.}, 20(1):33--61, 1998.

\bibitem{COHENDAHMENDEVORE}
Albert Cohen, Wolfgang Dahmen, and Ronald DeVore.
\newblock Compressed sensing and best {$k$}-term approximation.
\newblock {\em J. Amer. Math. Soc.}, 22(1):211--231, 2009.

\bibitem{DONOHOELAD}
David~L. Donoho and Michael Elad.
\newblock Optimally sparse representation in general (nonorthogonal)
  dictionaries via {$l^1$} minimization.
\newblock {\em Proc. Natl. Acad. Sci. USA}, 100(5):2197--2202, 2003.

\bibitem{DONOHOHUO}
David~L. Donoho and Xiaoming Huo.
\newblock Uncertainty principles and ideal atomic decomposition.
\newblock {\em IEEE Trans. Inform. Theory}, 47(7):2845--2862, 2001.

\bibitem{ELAD}
Michael Elad.
\newblock {\em Sparse and redundant representations : From theory to
  applications in signal and image processing}.
\newblock Springer, New York, 2010.

\bibitem{FOUCARTRAUHUT}
Simon Foucart and Holger Rauhut.
\newblock {\em A mathematical introduction to compressive sensing}.
\newblock Applied and Numerical Harmonic Analysis. Birkh\"{a}user/Springer, New
  York, 2013.

\bibitem{FRANKLARSON}
Michael Frank and David~R. Larson.
\newblock Frames in {H}ilbert {$C^\ast$}-modules and {$C^\ast$}-algebras.
\newblock {\em J. Operator Theory}, 48(2):273--314, 2002.

\bibitem{FUCHS2}
Jean-Jacques Fuchs.
\newblock More on sparse representations in arbitrary bases.
\newblock {\em IFAC Proceedings Volumes}, 36(16):1315--1320, 2003.

\bibitem{FUCHS}
Jean-Jacques Fuchs.
\newblock On sparse representations in arbitrary redundant bases.
\newblock {\em IEEE Trans. Inform. Theory}, 50(6):1341--1344, 2004.

\bibitem{GRIBONVALNIELSEN}
R\'{e}mi Gribonval and Morten Nielsen.
\newblock Sparse representations in unions of bases.
\newblock {\em IEEE Trans. Inform. Theory}, 49(12):3320--3325, 2003.

\bibitem{HANJINGMOHAPATRA}
Deguang Han, Wu~Jing, and Ram~N. Mohapatra.
\newblock Perturbation of frames and {R}iesz bases in {H}ilbert
  {$C^*$}-modules.
\newblock {\em Linear Algebra Appl.}, 431(5-7):746--759, 2009.

\bibitem{HANKORNELSON}
Deguang Han, Keri Kornelson, David Larson, and Eric Weber.
\newblock {\em Frames for undergraduates}, volume~40 of {\em Student
  Mathematical Library}.
\newblock American Mathematical Society, Providence, RI, 2007.

\bibitem{KAPLANSKY}
Irving Kaplansky.
\newblock Modules over operator algebras.
\newblock {\em Amer. J. Math.}, 75:839--858, 1953.

\bibitem{KUTYNIOK}
Gitta Kutyniok.
\newblock Data separation by sparse representations.
\newblock In {\em Compressed sensing}, pages 485--514. Cambridge Univ. Press,
  Cambridge, 2012.

\bibitem{NATARAJAN}
B.~K. Natarajan.
\newblock Sparse approximate solutions to linear systems.
\newblock {\em SIAM J. Comput.}, 24(2):227--234, 1995.

\bibitem{PASCHKE}
William~L. Paschke.
\newblock Inner product modules over {$B^{\ast} $}-algebras.
\newblock {\em Trans. Amer. Math. Soc.}, 182:443--468, 1973.

\bibitem{RAEBURNTHOMPSON}
Iain Raeburn and Shaun~J. Thompson.
\newblock Countably generated {H}ilbert modules, the {K}asparov stabilisation
  theorem, and frames with {H}ilbert modules.
\newblock {\em Proc. Amer. Math. Soc.}, 131(5):1557--1564, 2003.

\bibitem{RIEFFEL}
Marc~A. Rieffel.
\newblock Induced representations of {$C^{\ast} $}-algebras.
\newblock {\em Advances in Math.}, 13:176--257, 1974.

\bibitem{TERLAKY}
T.~Terlaky.
\newblock On {$l_p$} programming.
\newblock {\em European J. Oper. Res.}, 22(1):70--100, 1985.

\bibitem{TILLMANN}
A.~M. Tillmann.
\newblock Equivalence of linear programming and basis pursuit.
\newblock {\em Proc. Appl. Math. Mech.}, 15:735--738, 2015.

\bibitem{XUEYE}
Guoliang Xue and Yinyu Ye.
\newblock An efficient algorithm for minimizing a sum of {$p$}-norms.
\newblock {\em SIAM J. Optim.}, 10(2):551--579, 2000.

\end{thebibliography}

\end{document}